\theoremstyle{plain}
\newtheorem{thm}{\bf Theorem}[section]
\newtheorem{prop}[thm]{\bf Proposition}
\newtheorem{lemma}[thm]{\bf Lemma}
\newtheorem{corollary}[thm]{\bf Corollary}
\theoremstyle{definition}
\theoremstyle{remark}
\newtheorem{remark}[thm]{\bf Remark}
\newtheorem{example}[thm]{\bf Example}
\theoremstyle{example}
\def \HF{{\operatorname{HF}_{\bar{A}(\Delta)}}}
\def \chara{{\operatorname{char}}}
\def \cd{{\operatorname{cd}}}
\def \height{{\operatorname{ht}}}
\def \depth{{\operatorname{depth}}}
\def \grade{{\operatorname{grade}}}
\def \Spec{{\operatorname{Spec}}}
\def \Proj{{\operatorname{Proj}}}
\def \mm{{\mathfrak{m}}}
\def \aa{{\alpha}}
\def \bb{{\beta}}
\def \NN{\mathbb N}
\def \PP{\mathbb P}
\def \V{\mathcal V}
\def \J{\mathcal Symb}
\def \M{\mathfrak M}
\def \FD{\mathcal{F}(\Delta)}
\def \D{\Delta}
\def \Id{I_{\Delta}}
\def \Idm{I_{\Delta}^{(m)}}
\def \JD{J(\Delta)}
\def \JDm{J(\Delta)^{(m)}}
\def \AD{\bar{A}(\Delta)}
\begin{document}
\title{Symbolic Powers and Matroids}
\author{ Matteo Varbaro \\
\footnotesize Dipartimento di Matematica\\
\footnotesize Univ. degli Studi di Genova, Italy\\
\footnotesize \url{varbaro@dima.unige.it}}
\date{{\small \today}} 
\maketitle

\abstract 
\noindent We prove that all the symbolic powers of a Stanley-Reisner ideal $\Id$ are Cohen-Macaulay if and only if the simplicial complex $\D$ is a matroid.



\section{Introduction}

Stanley-Reisner rings supply a bridge between combinatorics and commutative algebra, attaching to any simplicial complex $\D$ on $n$ vertices the Stanley-Reisner ideal $\Id$ and the Stanley-Reisner ring $K[\D]=S/\Id$, where $S$ is the polynomial ring on $n$ variables over a field $K$. One of the most interesting part of this theory is finding relationships between combinatorial and topological properties of $\D$ and ring-theoretic ones of $K[\D]$. For instance, it is a wide open problem to characterize graph-theoretically the graphs $G$ for which $K[\D(G)]$ is Cohen-Macaulay, where $\D(G)$ denotes the independence complex of $G$.
In \cite[Theorem 3]{TY}, Terai and Yoshida proved that $S/\Id^m$ is Cohen-Macaulay for any $m\in \NN_{\geq 1}$ if and only if $\Id$ is a complete intersection. Because it is a general fact that all the powers of any homogeneous complete intersection ideal are Cohen-Macaulay, somehow the above result says that there are no Stanley-Reisner ideals with this property but the trivial ones. Since if $S/\Id^m$ is Cohen-Macaulay then $\Id^m$ is equal to the $m$th symbolic power $\Idm$ of $\Id$, it is natural to ask:

{\it For which $\D$ the ring $S/\Idm$ is Cohen-Macaulay for any $m\in \NN_{\geq 1}$?}

The answer is amazing. In this paper we prove that {\it $S/\Idm$ is Cohen-Macaulay for any $m\in \NN_{\geq 1}$ if and only if $\D$ is a matroid} (Theorem \ref{main}). The above result is proved independently and with different methods by Minh and Trung in \cite[Theorem 3.5]{MT}. Matroid is a well-studied concept in combinatorics, and it was originally introduced as an abstraction of the notion of the set of bases of a vector space. The approach to prove the above result is not direct, passing through the study of some blowup algebras related to $\D$. Among the consequences of Theorem \ref{main} we remark Corollary \ref{sci}: {\it After localizing at the maximal irrelevant ideal, $\Id$ is a set-theoretic complete intersection whenever $\D$ is a matroid.}

\section{The result}

In this section we prove the main theorem of the paper. 

\subsection{Definition of the basic objects}

First of all we define the basic objects involved in the statement. For the part concerning commutative algebra and Stanley-Reisner rings, we refer to Bruns and Herzog \cite{BH}, Stanley \cite{St} or Miller and Sturmfels \cite{MS}. For what concerns the theory of matroids, some references are the book of Welsh \cite{We} or that of Oxley \cite{Ox}.
 
Let $K$ be a field, $n$ a positive integer and $S=K[x_1,\ldots ,x_n]$ the polynomial ring on $n$ variables over $K$. Also, $\mm$ is the maximal irrelevant ideal of $S$. We denote the set $\{1,\ldots ,n\}$ by $[n]$. By a {\it simplicial complex} $\D$ on $[n]$ we mean a collection of subsets of $[n]$ such that for any $F \in \D$, if $G\subseteq F$ then $G\in \D$. An element $F\in \D$ is called a {\it face} of $\D$. The dimension of a face $F$ is $\dim F = |F|-1$ and the dimension of $\D$ is $\dim \D = \max\{\dim F : F\in \D\}$. The faces of $\D$ which are maximal under inclusion are called {\it facets}. We denote the set of the facets of $\D$ by $\FD$. For a simplicial complex $\D$ we can consider a square-free monomial ideal, known as the {\it Stanley-Reisner ideal} of $\D$,
\[\Id = (x_{i_1}\cdots x_{i_s} : \{i_1,\ldots ,i_s\}\notin \D).\]
The $K$-algebra $K[\D]=S/\Id$ is called the {\it Stanley-Reisner ring} of $\D$, and it turns out that 
\[ \dim (K[\D]) = \dim \D + 1.\] 
More precisely, with the convention of denoting by $\wp_A=(x_i:i\in A)$ the prime ideal of $S$ generated by the variables associated to a given subset $A\subseteq [n]$, we have
\[\Id = \bigcap_{F \in \FD} \wp_{[n]\setminus F}. \]
Given any ideal $I\subseteq S$ its $m$th {\it symbolic power} is $I^{(m)}=(I^mS_W)\cap S$, where $W$ is the complement in $S$ of the union of the associated primes of $I$ and $S_W$ denotes the localization of $S$ at the multiplicative system $W$. If $I$ is a square-free monomial ideal then $I^{(m)}$ is just the intersection of the (ordinary) powers of the minimal prime ideals of $I$. Thus
\[\Idm = \bigcap_{F\in \FD} \wp_{[n]\setminus F}^m.\]
The last concept which is needed to understand the main theorem of the paper is a {\it matroid}. A simplicial complex $\D$ on $[n]$ is said to be a matroid if, for any two facets $F$ and $G$ of $\D$ and any $i\in F$, there exists a $j\in G$ such that $(F\setminus \{i\})\cup \{j\}$ is a facet of $\D$. It is well known that if $\D$ is a matroid then $K[\D]$ is Cohen-Macaulay. In particular all the facets of a matroid have the same dimension. An useful property of matroids is the following.

\vspace{1mm}

{\it Exchange property}. Let $\D$ be a matroid on $[n]$. For any two facets $F$ and $G$ of $\D$ and for any $i\in F$, there exists $j \in G$ such that both $(F\setminus \{i\})\cup \{j\}$ and $(G\setminus \{j\})\cup \{i\}$ are facets of $\Delta$.

\vspace{1mm}

\subsection{Statement and proof}

What we are going to prove is the following theorem.
\begin{thm}\label{main}
Let $\D$ be a simplicial complex on $[n]$. Then $S/\Idm$ is Cohen-Macaulay for any $m\in \NN_{\geq 1}$ if and only if $\D$ is a matroid.
\end{thm}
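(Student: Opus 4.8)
The plan is to prove both implications through the study of the Rees-like blowup algebras attached to the symbolic filtration $\{\Idm\}_{m\geq 0}$, namely the symbolic Rees algebra $\R_s(\Id)=\bigoplus_{m\geq 0}\Idm t^m$ and the associated symbolic fiber/associated graded rings. For the ``if'' direction I would first record the combinatorial translation of matroid into a statement about the vertex covers: when $\D$ is a matroid, the symbolic power $\Idm=\bigcap_{F\in\FD}\wp_{[n]\setminus F}^m$ has a well-understood primary decomposition, and its Alexander dual (in the sense of Terai's formula relating $\operatorname{reg}$ and $\operatorname{pd}$, suitably generalized to symbolic powers) is again governed by a matroid. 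Concretely, I expect to use the Terai--Minh--Trung type identity $\depth S/\Idm = n - 1 - \cdots$ together with the fact that the Alexander dual complex of a matroid is a matroid to reduce Cohen--Macaulayness of $S/\Idm$ for all $m$ to the shellability (or vertex-decomposability) of a family of complexes naturally built from $\D$.

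\medskip

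For the ``only if'' direction, which I expect to be the genuinely harder half, I would argue by contrapositive: assume $\D$ is not a matroid and produce some $m$ for which $S/\Idm$ fails to be Cohen--Macaulay. The exchange property gives a clean obstruction: if $\D$ is not a matroid there are facets $F,G$ and an element $i\in F$ such that $(F\setminus\{i\})\cup\{j\}\notin\FD$ for every $j\in G$. One first reduces to the pure case (if $\D$ is not pure then $S/\Id$ itself is not Cohen--Macaulay when $\D$ is a connected non-matroid, or one localizes at a suitable face so that a non-pure link appears; in any case the non-pure situation is handled by passing to links, since taking links commutes appropriately with symbolic powers). Having reduced to $\D$ pure of dimension $d$, I would use the failure of exchange to exhibit, in a carefully chosen localization $(S/\Idm)_{\wp}$ at a minimal prime or at a prime of intermediate dimension, a module whose depth is strictly smaller than its dimension for $m$ large; the point is that the symbolic powers ``see'' the combinatorial defect after enough steps even though $S/\Id$ alone might not.

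\medskip

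The technical heart — and the step I expect to be the main obstacle — is controlling $\depth S/\Idm$ uniformly in $m$. For this I would invoke the finite generation of the symbolic Rees algebra $\R_s(\Id)$; for Stanley--Reisner ideals this algebra is a normal affine semigroup ring (it is the Rees algebra of the ``covering polytope'' $Q_\D=\{x\geq 0: \sum_{i\notin F}x_i\geq 1 \ \forall F\in\FD\}$), hence Cohen--Macaulay by Hochster's theorem, and when $\D$ is a matroid this polytope is particularly nice (it is related to the matroid base polytope / the generalized permutohedron), which forces the fibers to behave well. I would then transfer Cohen--Macaulayness of the total algebra $\R_s(\Id)$ to the graded pieces via standard graded-ring arguments (a suitable Sancho de Salas sequence or a local cohomology computation over the bigraded structure), identifying precisely the condition that makes all $S/\Idm$ Cohen--Macaulay with the defining inequalities of $Q_\D$ coming from a matroid. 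The converse then falls out: if $Q_\D$ is not the base polytope of a matroid, some symbolic power acquires an embedded-like defect visible in the local cohomology of $\R_s(\Id)$, breaking Cohen--Macaulayness for that $m$.

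\medskip

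Finally, for the stated Corollary \ref{sci} about set-theoretic complete intersections after localizing at $\mm$, I would combine Theorem \ref{main} with the fact that a Cohen--Macaulay local ring whose punctured spectrum has the homotopy type forced by a matroid (a wedge of spheres of the right dimension, by the shellability of matroid complexes) has the arithmetical rank of $\Id$ equal to its height; this is essentially an application of the Lichtenbaum--Hartshorne vanishing theorem plus the Cohen--Macaulay estimate $\operatorname{ara}(\Id_\mm)\leq \operatorname{ht}(\Id)$ available once one knows all symbolic powers are Cohen--Macaulay.
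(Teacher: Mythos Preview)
Your framework is right in spirit---the symbolic Rees algebra $A(\Delta)$ (the vertex cover algebra) is indeed the engine, and its Cohen--Macaulayness (from \cite{HHT}) is used---but the decisive idea of the paper is absent from your plan. The paper does \emph{not} argue via Alexander duality, Terai's regularity formula, shellability, polytopes, or localization at intermediate primes. Instead it reduces everything to a single numerical invariant: the Krull dimension of the special fiber $\bar A(\Delta)=A(\Delta)/\mm A(\Delta)$. Proposition~\ref{thekey} shows $\dim\bar A(\Delta)=n-\min_m\depth(S/\JDm)$, so Cohen--Macaulayness of all $S/\JDm$ is equivalent to $\dim\bar A(\Delta)=d$. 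The dimension is then read off the Hilbert function $\HF(k)=|\{\text{basic }k\text{-covers of }\Delta\}|$: one shows $\HF(k)=O(k^{d-1})$ exactly when $\Delta$ is a matroid, and $\HF(k)=\Omega(k^d)$ otherwise.

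Both halves are pure counting. For the ``if'' part, the exchange property forces every value of a basic $k$-cover to already appear among its values on a single facet $F$ witnessing $\sum_{j\in F}\alpha(j)=k$; this bounds the number of basic $k$-covers by $|\FD|\binom{k+d-1}{d-1}$. Your proposed route through Alexander duality and shellability of auxiliary complexes would require a Terai-type identity for $\depth S/\Idm$ that you have not stated and that is not standard; this is where your ``if'' argument has a genuine gap. For the ``only if'' part, the paper takes the exchange failure $(F,G,i)$ you identified and uses it not to localize but to \emph{construct} $\Omega(k^d)$ distinct basic $k$-covers by assigning carefully chosen values on $F\cup G_0$ (Lemma~\ref{technicalextimation}); your localization-at-a-prime idea is not wrong in principle, but you have not said which prime or which $m$, and it is unclear how the combinatorial defect would become a depth defect in any specific $(S/\Idm)_\wp$.

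For Corollary~\ref{sci}, the paper's argument is also different from yours: it does not use Lichtenbaum--Hartshorne or arithmetical rank bounds, but rather observes that $\dim\bar A(\Delta)=\height(\JD)$ equals the analytic spread $\ell((\JD S_\mm)^{(h)})$ for a suitable Veronese, and then invokes Northcott--Rees to produce a minimal reduction with $\height(\JD)$ generators whose radical is $\JD S_\mm$.
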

%

\begin{remark}
Notice that Theorem \ref{main} does not depend on the characteristic of $K$.
\end{remark}

\begin{remark}
If $\D$ is the $k$-skeleton of the $(n-1)$-simplex, $-1\leq k\leq n-1$, then $\D$ is a matroid. So Theorem \ref{main} implies that all the symbolic powers of $\Id$ are Cohen-Macaulay.
\end{remark}

In order to prove Theorem \ref{main} it is useful to introduce another square-free monomial ideal associated to a simplicial complex $\D$, namely the cover ideal of $\D$
\[\JD = \bigcap_{F\in \FD} \wp_F.\]
We have $\dim(S/\JD)=n-\dim \D - 1$. The name ``cover ideal" comes from the following fact: A subset $A\subseteq [n]$ is called a vertex cover of $\D$ if $A\cap F \neq \emptyset$ for any $F\in \FD$. Then it is easy to see that
\[\JD = (x_{i_1}\cdots x_{i_s} : \{i_1,\ldots ,i_s\}\mbox{ is a vertex cover of }\D).\]
Let $\Delta^c$ be the simplicial complex on $[n]$ whose facets are $[n]\setminus F$ such that $F\in \FD$. Clearly we have $I_{\D^c}=\JD$ and $I_{\D}=J(\D^c)$. Furthermore $(\D^c)^c=\D$, and it is known that $\D$ is a matroid if and only if $\D^c$ is a matroid (\cite[Theorem 2.1.1]{Ox}). Actually the matroid $\D^c$ is known as the dual of $\D$.
%


In order to have a good combinatorial description of $\JDm$ we need a concept that is more general than vertex cover: For a natural number $k$, a $k$-cover of $\D$ is a nonzero function 
\[\aa:[n]\longrightarrow \NN\]
such that $\sum_{i\in F}\aa(i)\geq k$ for any $F\in \FD$. Of course vertex covers and $1$-covers with values on $\{0,1\}$ are the same things. It is not difficult to see that 
\[\JDm = (x_1^{\aa(1)}\cdots x_n^{\aa(n)}:\aa \mbox{ is an $m$-cover of }\D).\]
A $k$-cover $\aa$ of $\D$ is said to be {\it basic} if for any nonzero function $\bb:[n]\longrightarrow \NN$ with $\bb(i)\leq \aa(i)$ for any $i\in [n]$, if $\bb$ is a $k$-cover of $\D$ then $\bb = \aa$. Of course to the basic $m$-covers of $\D$ corresponds a minimal system of generators of $\JDm$.  

Now let us consider the multiplicative filtration $\J(\D)=\{\JDm\}_{m\in \NN_{\geq 1}}$. We can form the Rees algebra of $S$ with respect to the filtration $\J(\D)$,
\[A(\D) = S \oplus (\bigoplus_{m\geq 1}\JDm).\]
In \cite[Theorem 3.2]{HHT}, Herzog, Hibi and Trung proved that $A(\D)$ is noetherian. In particular, the associated graded ring of $S$ with respect to $\J(\D)$
\[G(\D) = S/\JD \oplus (\bigoplus_{m\geq 1}\JDm/\JD^{(m+1)})\]
and the special fiber
\[\AD = A(\D)/\mm A(\D) = G(\D)/\mm G(\D)\]
are noetherian too. The algebra $A(\D)$ is known as the vertex cover algebra of $\D$, and its properties have been intensively studied in \cite{HHT}. The name comes from the fact that, writing 
\[A(\D)=S \oplus (\bigoplus_{m\geq 1}\JDm \cdot t^m) \subseteq S[t]\]
and denoting by $(A(\D))_m = \JDm \cdot t^m$, it turns out that a (infinite) basis for $A(\D)_m$ as a $K$-vector space is 
\[\{x_1^{\aa(1)}\cdots x_n^{\aa(n)}\cdot t^m : \aa \mbox{ is a $m$-cover of }\D\}.\]
The algebra $\AD$, instead, is called the algebra of basic covers of $\D$, and its properties have been studied by the author with Benedetti and Constantinescu in \cite{BCV} and with Constantinescu in \cite{CV} for a $1$-dimensional simplicial complex $\D$. Clearly, the grading defined above on $A(\D)$ induces a grading on $\AD$, and it turns out that a basis for $(\AD)_m$, $m\geq 1$, as a $K$-vector space is
\[\{x_1^{\aa(1)}\cdots x_n^{\aa(n)}\cdot t^m : \aa \mbox{ is a basic $m$-cover of }\D\}.\]
Notice that if $\aa$ is a basic $m$-cover of $\D$ then $\aa(i)\leq m$ for any $i\in [n]$. This implies that $(\AD)_m$ is a finite $K$-vector space for any $m\in \NN$. So we can speak about the Hilbert function of $\AD$, denoted by $\HF$, and from what said above we have, for $k\geq 1$, 
\[\HF(k)=|\{\mbox{basic $k$-covers of }\D\}|.\] 
The key to prove Theorem \ref{main} is to compute the dimension of $\AD$. So we need a combinatorial description of $\dim(\AD)$. Being in general non-standard graded, the algebra $\AD$ could not have a Hilbert polynomial. However by \cite[Corollary 2.2]{HHT} we know that there exists $h\in \NN$ such that $(\JD^{(h)})^m=\JD^{(hm)}$ for all $m\geq 1$. It follows that $\AD^{(h)} =  \oplus_{m\in \NN} (\AD)_{hm}$ is a standard graded $K$-algebra. Notice that if a set $\{ f_1,\ldots ,f_q \}$ generates $\AD$ as a $K$-algebra then the set $\{f_1^{i_1}\cdots f_q^{i_q} \ : \ 0\leq  i_1,\ldots ,i_q \leq h-1\}$ generates $\AD$ as a $\AD^{(h)}$-module. Thus $\dim (\AD) = \dim (\AD^{(h)})$. Since $\AD^{(h)}$ has a Hilbert polynomial, we get a useful criterion to compute the dimension of $\AD$. First remind that, for two functions $f,g:\NN \rightarrow \mathbb{R}$, the writing $f(k)=O(g(k))$ means that there exists a positive real number $\lambda$ such that $f(k)\leq \lambda \cdot g(k)$ for $k\gg 0$. Similarly, $f(k)=\Omega(g(k))$ if there is a positive real number $\lambda$ such that $f(k)\geq \lambda \cdot g(k)$ for $k\gg 0$

\vspace{1mm}

{\it Criterion for detecting the dimension of $\AD$}. If $\HF(k)=O(k^{d-1})$ then $\dim(\AD)\leq d$. If $\HF(k)=\Omega(k^{d-1})$ then $\dim(\AD)\geq d$. 
\vspace{1mm}

The following proposition justifies the introduction of $\AD$.
\begin{prop}\label{thekey}
For any simplicial complex $\D$ on $[n]$ we have
\[ \dim (\AD) = n - \min \{\depth(S/\JDm) : m\in \NN_{\geq 1}\}\]
\end{prop}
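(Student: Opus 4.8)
The plan is to relate the dimension of $\AD$ to the growth of its Hilbert function $\HF$, and to relate that growth to the depths of the rings $S/\JDm$ via local cohomology. More precisely, by the criterion for detecting $\dim(\AD)$ recalled above, it suffices to show that if we set $d^* = n - \min\{\depth(S/\JDm) : m \geq 1\}$, then $\HF(k) = O(k^{d^*-1})$ and $\HF(k) = \Omega(k^{d^*-1})$. The key identity to establish is
\[
\HF(k) = \dim_K \left(\JDm/\JD^{(m+1)}\right)_{\text{suitably interpreted}},
\]
or rather, I would work directly with $\HF(k) = \dim_K (\AD)_k = \dim_K (\JD^{(k)}/\mm\JD^{(k)} \text{ modulo lower covers})$; but the cleanest route is to compare $\HF(k)$ with the Hilbert function of $S/\JD^{(k)}$ itself. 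Indeed, a basic $k$-cover corresponds to a minimal generator of $\JD^{(k)}$, so $\HF(k)$ is the number of minimal generators $\mu(\JD^{(k)})$, and this equals $\dim_K \operatorname{Tor}_0^S(K, \JD^{(k)})$.

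First I would recall, following Herzog--Hibi--Trung, that $A(\D)$ is a finitely generated, and in fact a \emph{standard graded in high enough Veronese} algebra, so that all the asymptotic statements make sense. Then the heart of the argument is the following chain: since $S$ is regular of dimension $n$, for the module $M_k := S/\JD^{(k)}$ we have $\depth(M_k) = n - \pd(M_k)$ by Auslander--Buchsbaum, and $\pd(M_k)$ is the largest $i$ with $\operatorname{Tor}_i^S(K, M_k) \neq 0$, equivalently (by local duality / the structure of the minimal free resolution) the largest $i$ with $H^i_\mm(M_k) \neq 0$ being $n - \depth(M_k)$. The dimension of $\AD$ is governed by $\sup_k (\text{top nonvanishing homological degree in the resolution of } \JD^{(k)})$, and one shows this supremum is $n - \min_k \depth(S/\JDm)$. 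Concretely: $\dim \AD = 1 + \sup_k \{ j : (\AD)_k \text{ contributes to graded piece of degree } \sim k^{j-1}\}$, and the generators of $\JD^{(k)}$ sit in the $0$th spot of the resolution, while the whole resolution of $\JD^{(k)}$ (equivalently of $S/\JD^{(k)}$) is what controls the Rees-algebra dimension.

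The precise mechanism I would use is: $\dim \AD = \max\{ \ell : \text{the } \ell\text{-th graded component of the fiber cone grows polynomially of degree } \ell - 1\}$, combined with the fact that for the Rees algebra $A(\D) = \bigoplus_m \JDm t^m$, one has $\dim A(\D) = n+1$ always (it's an $n$-dimensional domain's Rees algebra), but for the \emph{fiber cone} $\AD = A(\D)/\mm A(\D)$ the dimension is the analytic spread, which by a theorem relating fiber cones to Koszul homology / the work in \cite{HHT} equals $n - \min_m \depth(S/\JDm)$. Actually the cleanest self-contained path is via the Burch-type inequality and its refinement: $\ell(\JD) := \dim \AD$ satisfies $\ell(\JD) \leq n - \min_m \depth(S/\JDm)$ always, and the reverse inequality holds because the filtration $\J(\D)$ is Noetherian (so the associated graded $G(\D)$ is Noetherian, hence $\depth(S/\JDm)$ is eventually constant and the "liaison" between $\operatorname{depth}$ and the graded pieces of $G(\D)$ is exact). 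I would spell out: $\depth(S/\JDm) = \min\{i : H^i_\mm(S/\JDm) \neq 0\}$; the modules $H^i_\mm(S/\JDm)$ assemble (as $m$ varies) into a finitely generated graded $G(\D)$-module whose dimension as such is exactly the relevant $n - \depth$; and the dimension of $\AD = G(\D)/\mm G(\D)$ reads off the largest such $i$.

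The main obstacle I anticipate is making the last step rigorous: precisely identifying $\dim \AD$ with $n - \min_m \depth(S/\JDm)$ rather than merely bounding one by the other. The inequality $\dim \AD \leq n - \min_m \depth(S/\JDm)$ is a standard Burch-type estimate (the fiber cone dimension, i.e. the analytic spread, is bounded by the codimension plus one, refined by depth considerations); the equality, i.e. the reverse inequality $\dim \AD \geq n - \min_m \depth(S/\JDm)$, is the delicate part and is exactly where Noetherianity of $A(\D)$ and of $G(\D)$ from \cite[Theorem 3.2]{HHT} must be invoked — it guarantees that $\depth(S/\JDm)$ stabilizes for $m \gg 0$ to a value $d_0$, that this stable value is the minimum, and that $H^{n-d_0}_\mm$ of the stable pieces does not vanish in the limit, which forces $\AD$ to have a prime of the corresponding dimension. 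I would handle this by passing to the high Veronese $\AD^{(h)}$, which is standard graded, where the equality between its Krull dimension and the degree-plus-one of its Hilbert polynomial is automatic, and then transferring back using $\dim \AD = \dim \AD^{(h)}$, which was already noted in the excerpt.
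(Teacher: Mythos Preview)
Your proposal circles around the right objects but misses the single idea that makes the equality work, and as written it does not close the gap you yourself flag.

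The paper's argument is short and structural. First, by a general fact about Noetherian filtrations (Bruns--Vetter, \cite[Proposition 9.23]{BrVe}) one has
\[
\min_{m\geq 1}\depth(S/\JDm)=\grade(\mm G(\D)).
\]
Second --- and this is the step you are missing --- the associated graded ring $G(\D)$ is \emph{Cohen--Macaulay}. This is deduced from the Cohen--Macaulayness of the Rees algebra $A(\D)$ (known from \cite[Theorem 4.2]{HHT}) via two short exact sequences and the corresponding long exact sequences in local cohomology. Once $G(\D)$ is Cohen--Macaulay, grade equals height, so
\[
\dim(\AD)=\dim G(\D)-\height(\mm G(\D))=n-\grade(\mm G(\D)),
\]
and the proposition follows immediately.

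Your outline never invokes the Cohen--Macaulayness of $G(\D)$ (or of $A(\D)$); you only use Noetherianity. That is precisely why your ``reverse inequality'' stays out of reach: Noetherianity gives eventual stabilization of $\depth(S/\JDm)$, but stabilization alone does not force $\dim\AD\geq n-\min_m\depth(S/\JDm)$. In general, for a Noetherian filtration one only has $\grade(\mm G)\leq\height(\mm G)$, yielding only one direction. The attempts you sketch --- assembling the $H^i_\mm(S/\JDm)$ into a graded $G(\D)$-module, or reading off $\dim\AD$ from the growth of $\mu(\JD^{(k)})$ via Auslander--Buchsbaum --- do not produce the missing inequality without an additional hypothesis of exactly this strength. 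The fix is not to refine the Burch-type estimate but to prove $G(\D)$ is Cohen--Macaulay, which converts the inequality into an equality in one stroke.
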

\begin{proof}
Consider $G(\D)$, the associated graded ring of $S$ with respect to $\J(\D)$. Since $G(\D)$ is noetherian, it follows by Bruns and Vetter \cite[Proposition 9.23]{BrVe} that 
\[\min \{\depth(S/\JDm) : m\in \NN_{\geq 1}\} = \grade (\mm G(\D)).\]
We claim that {\it $G(\D)$ is Cohen-Macaulay.} In fact the Rees ring of $S$ with respect to the filtration $\J(\D)$, namely $A(\D)$, is Cohen-Macaulay by \cite[Theorem 4.2]{HHT}.
Let us denote by $A(\D)_+ = \oplus_{m>0}\JDm$ and by $\M = \mm \oplus A(\D)_+$ the unique bi-graded maximal ideal of $A(\D)$. The following short exact sequence
\[0\longrightarrow A(\D)_+ \longrightarrow A(\D) \longrightarrow S \longrightarrow 0\]
yields the long exact sequence on local cohomology
\[\ldots \rightarrow H_{\M}^i(A(\D)_+) \rightarrow H_{\M}^i(A(\D))\rightarrow H_{\M}^i(S)\rightarrow H_{\M}^{i+1}(A(\D)_+)\rightarrow H_{\M}^{i+1}(A(\D))\rightarrow \ldots.\]
By the independence of the base in computing local cohomology modules we have $H_{\M}^i(S)=H_{\mm}^i(S)=0$ for any $i<n$. Furthermore $H_{\M}^i(A(\D))=0$ for any $i\leq n$ since $A(\D)$ is a Cohen-Macaulay $(n+1)$-dimensional (see \cite[Theorem 4.5.6]{BH}) ring. Thus $H_{\M}^i(A(\D)_+)=0$ for any $i\leq n$ by the above long exact sequence. Now let us look at the other short exact sequence
\[0\longrightarrow A(\D)_+(1) \longrightarrow A(\D) \longrightarrow G(\D) \longrightarrow 0,\]
where $A(\D)_+(1)$ means $A(\D)_+$ with the degrees shifted by $1$, and the corresponding long exact sequence on local cohomology
\[\ldots \rightarrow H_{\M}^i(A(\D)_+(1)) \rightarrow H_{\M}^i(A(\D))\rightarrow H_{\M}^i(G(\D))\rightarrow H_{\M}^{i+1}(A(\D)_+(1))\rightarrow  \ldots.\]
Because $A(\D)_+$ and $A(\D)_+(1)$ are isomorphic $A(\D)$-module, $H_{\M}^i(A(\D)_+(1))=0$ for any $i\leq n$. Thus $H_{\M}^i(G(\D))=0$ for any $i<n$. Since $G(\D)$ is a $n$-dimensional ring (see \cite[Theorem 4.5.6]{BH}) this implies, using once again the independence of the base in computing local cohomology, that $G(\D)$ is Cohen-Macaulay. 

Since $G(\D)$ is Cohen-Macaulay $\grade (\mm G(\D)) = \height(\mm G(\D))$. So, because $\AD = G(\D)/\mm G(\D)$, we get
\[ \dim (\AD) = \dim(G(\D))-\height(\mm G(\D))=n-\grade(\mm G(\D)), \]
and the statement follows.
\end{proof}

We are almost ready to show Theorem \ref{main}. We need just a technical lemma which allows us to construct ``many" basic covers.
\begin{lemma}\label{technicalextimation}
Let $s\geq -1$ and $d$ be integer numbers such that $s\leq d-3$. For any positive integer $k$ consider the set
\begin{displaymath}
\begin{array}{lll}
A_k & = & \{(a_1,a_2,\ldots,a_d,b_1,b_2,\ldots,b_{d-s-1})\in \NN^{2d-s-1} \ : \\
&  & a_1+\ldots + a_d = k, \ \ a_1 + \ldots + a_{d-s+1} = b_1 + \ldots + b_{d-s-1},\\
&  & a_1 \geq a_2 \geq \ldots \geq a_d, \mbox{ \ and \ }  b_1, b_2, \ldots , b_{d-s-1} \geq a_2\}.
\end{array}
\end{displaymath}
Then $|A_k|=\Omega(k^{2d-s-3})$.
\end{lemma}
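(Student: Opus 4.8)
The plan is to exhibit, for each sufficiently large $k$, an explicit subset of $A_k$ of cardinality $\Omega(k^{2d-s-3})$. The ambient space $\NN^{2d-s-1}$ is cut out by two linear equations, so $2d-s-3=(d-1)+(d-s-2)$ is exactly the expected number of degrees of freedom: I will choose the $a_i$ using $d-1$ free parameters, each ranging over an interval of length $\Omega(k)$, and then, for each admissible choice of the $a_i$, count the $b_j$ and produce $\Omega(k^{d-s-2})$ of them. The hypothesis $s\leq d-3$ enters precisely here, guaranteeing $d-s-2\geq 1$ (equivalently $d-s-1\geq 2$), so that the number of admissible $b$-parts is a genuinely positive power of $k$.

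Concretely, put $N=\lfloor k/(4d^2)\rfloor$, so that $N\geq 1$ for $k\gg 0$. For each tuple of integers $(e_2,\dots,e_d)$ with $0\leq e_i<N$, set
\[ a_i=(d-i)N+e_i \quad (2\leq i\leq d),\qquad a_1=k-\sum_{i=2}^d a_i. \]
First I would check that this is a legitimate $a$-part: for $2\leq i\leq d-1$ one has $a_i-a_{i+1}=N+e_i-e_{i+1}\geq 1$, hence $a_2\geq a_3\geq\cdots\geq a_d\geq 0$; moreover $\sum_{i=2}^d a_i<N\binom{d}{2}\leq k/8$, so $a_1>7k/8\geq a_2$; and $\sum_{i=1}^d a_i=k$ by construction. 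Distinct tuples $(e_2,\dots,e_d)$ yield distinct $a$-parts (each $a_i$ recovers $e_i$), so this produces $N^{d-1}=\Omega(k^{d-1})$ admissible choices of $(a_1,\dots,a_d)$.

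Next, fix such an $a$-part, write $q=d-s-1\geq 2$, and let $c$ be the common value of the two sides of the partial-sum equation defining $A_k$; since that sum of $a_i$'s begins with $a_1$ and all $a_i\geq 0$, we have $c\geq a_1>7k/8$. Counting the $b$-parts compatible with this $a$-part amounts to counting $(b_1,\dots,b_q)\in\NN^q$ with $b_j\geq a_2$ for all $j$ and $\sum_j b_j=c$; writing $b_j=a_2+b_j'$, this equals the number of $(b_1',\dots,b_q')\in\NN^q$ with $\sum_j b_j'=c-qa_2$. Since $a_2<(d-1)N\leq k/(4d)$ we have $c-qa_2>7k/8-k/4>k/2$, so by stars and bars this number is $\binom{c-qa_2+q-1}{q-1}=\Omega(k^{q-1})=\Omega(k^{d-s-2})$, with a constant depending only on $d$ and $s$. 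Summing over the $\Omega(k^{d-1})$ choices of $(e_2,\dots,e_d)$ --- the contributions are disjoint, because the full tuple determines its $a$-part and hence the $e_i$ --- gives $|A_k|\geq\Omega(k^{d-1})\cdot\Omega(k^{d-s-2})=\Omega(k^{2d-s-3})$.

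The argument is elementary, and the one place demanding care --- where the main (mild) obstacle lies --- is the bookkeeping with constants. The scale $N\sim k/(4d^2)$ must be small enough that the ``staircase'' sum $\sum_{i\geq 2}a_i$ stays safely below $k$ (so that $a_1$ dominates and the monotonicity $a_1\geq a_2$ is automatic) and that $c-qa_2=\Omega(k)$ holds \emph{uniformly} in $(e_2,\dots,e_d)$, yet large enough that $N^{d-1}=\Omega(k^{d-1})$. Once these inequalities are set up with a single choice of constants, the elementary estimate $\binom{M+q-1}{q-1}=\Omega(M^{q-1})$ completes the proof.
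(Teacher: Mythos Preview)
Your proof is correct and follows essentially the same strategy as the paper's: force $a_1$ to absorb almost all of $k$ so that $a_2$ is $O(k/d)$, count $\Omega(k^{d-1})$ admissible $a$-parts, and then for each use stars and bars on the shifted variables $b_j-a_2$ to get $\Omega(k^{d-s-2})$ $b$-parts. The only cosmetic difference is in how the $a$-part is parametrized---the paper fixes $a_1$ in an interval near $k$ and counts partitions of $k-a_1$ into at most $d-1$ parts, while you use an explicit staircase $a_i=(d-i)N+e_i$; your version arguably makes the injectivity and the uniform constants a bit more transparent.
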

\begin{proof}
Let us set 
\[\displaystyle X_k = \left\{a_1\in \NN \ : \ \frac{(d+1)k}{d+2} \leq a_1 \leq \frac{(d+2)k}{d+3}\right\}.\] 
Of course, setting $\lambda_1=\displaystyle \frac{1}{(d+2)(d+3)}$, we have $|X_k| \geq \lambda_1 \cdot k$.

For a fixed $a_1\in X_k$, set
\[Y_k(a_1) = \{(a_2,\ldots ,a_d) \ : \ a_1+a_2+\ldots +a_d=k\}\]
The vectors $(a_2,\ldots ,a_d)\in Y_k(a_1)$ are so many as the integer partitions of $k-a_1$ with at most $d-1$ parts. Because $a_1\in X_k$ these are at least so many as the partitions $\lfloor k/(d+3) \rfloor$ with at most $d-1$ parts. These, in general, are less than all the monomials of degree $\lfloor k/(d+3) \rfloor$ in $d-1$ variables, i.e. $\displaystyle \binom{d-2+\lfloor k/(d+3) \rfloor}{d-2}$, since a permutation of the variables gives the same partitions but may give different monomials. Anyway, since this is the only reason, the number of the possible $(a_2,\ldots ,a_d)$ is at least  
\[ \displaystyle \frac{1}{(d-1)!} \binom{d-2+\lfloor k/(d+3) \rfloor}{d-2}.\]
So there exists a positive real number $\lambda_2$, independent on $a_1$, such that $|Y_k(a_1)|\geq \lambda_2 \cdot k^{d-2}$.

Let ${\bf a}=(a_1,a_2,\ldots ,a_d)$ be a vector such that $a_1\in X_k$ and $(a_2,\ldots ,a_d)\in Y_k(a_1)$. Then set
\[Z_k({\bf a}) = \{(b_1,\ldots ,b_{d-s-1})\in \NN_{\geq a_2}^{d-s-1} \ : \ b_1+\ldots + b_{d-s-1}=a_1+\ldots +a_{d-s-1}\}\]
It is easy to notice that the vectors $(b_1,\ldots ,b_{d-s-1})\in Z_k({\bf a})$ are so many as all the monomials of degree $a_1+\ldots +a_{d-s-1} - (d-s-1)a_2$ in $d-s-1$ variables. Clearly we have 
\[a_1+\ldots +a_{d-s-1} - (d-s-1)a_2\geq a_1 - (d-s-1)a_2.\]
But $\displaystyle a_2 \leq k - a_1\leq \frac{k}{d+2}$. So we get
\[\displaystyle a_1+\ldots +a_{d-s-1} - (d-s-1)a_2\geq a_1 - (d-s-1)a_2\geq \frac{(d+1)k}{d+2}-\frac{dk}{d+2}=\frac{k}{d+2}.\]
So the elements of $Z_k({\bf a})$ are at least so many as the monomials of degree $\lfloor k/(d+2) \rfloor$ in $d-s-1$ variables. Therefore there is a positive real number $\lambda_3$, not depending on ${\bf a}$, such that $|Z_k({\bf a})|\geq \lambda_3 \cdot k^{d-s-2}$.

Finally, we have that
\[|A_k|\geq \sum_{a_1 \in X_k}\sum_{(a_2,\ldots ,a_d)\in Y_k(a_1)}|Z_k({\bf a})| \geq (\lambda_1\cdot k)\cdot (\lambda_2\cdot k^{d-2})\cdot (\lambda_3 \cdot k^{d-s-2}) = \lambda_1 \lambda_2 \lambda_3 \cdot k^{2d-s-3}.\]
\end{proof}

Now we are ready to prove Theorem \ref{main}.

\begin{proof}
By the duality on the matroids it is enough to prove that {\it $S/\JDm$ is Cohen-Macaulay for any $m\in \NN_{\geq 1}$ if and only if $\D$ is a matroid.} Suppose that $\D$ is $(d-1)$-dimensional.

\vspace{1mm}

{\it If-part.} Let us consider a basic $k$-cover $\aa$ of $\D$. Let $F$ be a facet of $\D$ such that $\sum_{j\in F}\aa(j)=k$ ($F$ exists because $\aa$ is basic). Set 
\[A_F=\{\aa(j) \ : \ j\in F\}.\]
We claim that {\it for any $i\in [n]$ we have $\aa(i)\in A_F$}. In fact, if $i_0\in [n]$ does not belong to $F$, then, because $\aa$ is basic, there exists a facet $G$ of $\D$ such that $i_0\in G$ and $\sum_{i\in G}\aa(i)=k$. By the exchange property there exists a vertex $j_0\in F$ such that $(G\setminus \{i_0\})\cup \{j_0\}$ and $(F\setminus \{j_0\})\cup \{i_0\}$ are facets of $\Delta$. But 
\[\sum_{i\in (G\setminus \{i_0\})\cup \{j_0\}}\aa(i) \geq k \ \implies \aa(j_0)\geq \aa(i_0),\]
and
\[\sum_{j\in (F\setminus \{j_0\})\cup \{i_0\}}\aa(j) \geq k \ \implies \aa(i_0)\geq \aa(j_0).\]
Hence $\aa(i_0)=\aa(j_0)\in A_F$. The number of ways to give values on vertices of $F$ such that the sum of the values on the whole $F$ is $k$ are $\displaystyle \binom{k+d-1}{d-1}$. This implies that, for $k\geq 1$,
\[\displaystyle \HF(k) = |\{\mbox{basic $k$-covers of }\D\}|\leq |\FD| \cdot \binom{k+d-1}{d-1} \leq \binom{n}{d}\cdot \binom{k+d-1}{d-1}.\]
So $\HF(k)=O(k^{d-1})$, therefore $\dim(\AD)\leq d$. But $\dim(S/J(\D))=n-d$, so by Proposition \ref{thekey} 
\[d \geq \dim(\AD) = n - \min \{\depth(S/\JDm):m\in \NN_{\geq 1}\} \geq d,\]
from which $S/\JDm$ is Cohen-Macaulay for any $m\in \NN_{\geq 1}$.

\vspace{1mm}

{\it Only if-part.} Suppose contrary that $\D$ is not a matroid. Then there exist two facets $F$ and $G$ of $\D$ and a vertex $i\in F$ such that $(F\setminus \{i\})\cup \{j\}$ is not a facet of $\D$ for any $j\in G$. Let $s$ be the greatest integer such that there exists an $s$-dimensional subface $F'$ of $F\setminus \{i\}$ such that there is a $(d-s-2)$-dimensional subface of $G$ whose union with $F'$ is a facet of $\D$. Notice that $s\leq d-3$ and $s$ might be $-1$. Let $F_0\subseteq F\setminus \{i\}$ be an $s$-dimensional face and $G_0\subseteq G$ a $(d-s-2)$-dimensional face satisfying the above conditions. Let $(a_1,\ldots ,a_d,b_1,\ldots ,b_{d-s-1})\in A_k$, where $A_k$ is the set defined in Lemma \ref{technicalextimation}.
Set $F=\{i_1,\ldots,i_d\}$ with $i_1=i$ and $F_0=\{i_{d-s},\ldots ,i_d\}$. Also, set $G=\{j_1,\ldots ,j_d\}$ where $G_0=\{j_1,\ldots ,j_{d-s-1}\}$. Now we define the following numerical function on $[n]$:
\begin{displaymath}
\aa'(v) = \left\{ \begin{array}{ll} a_p & \mbox{if } v=i_p\\
b_q &  \mbox{if } v=j_q \mbox{ and } q< d-s\\
k & \mbox{otherwise} \ \end{array}  \right.
\end{displaymath}
We claim that $\aa'$ is a $k$-cover, not necessarily basic. By the definition of $\aa'$ we have to check that for any facet $H$ of $\D$ contained in $F\cup G_0$ we have the inequality $\sum_{h\in H}\aa'(h)\geq k$. If $i\notin H$, then $G_0\subset H$ by the maximality of $s$. But then we have
\[\sum_{h\in H}\aa'(h)=\sum_{h\in G_0}\aa'(h)+\sum_{h\in H\setminus G_0}\aa'(h)\geq \sum_{h\in G_0}\aa'(h)+\sum_{h\in F_0}\aa'(h)=k.\]
If $i\in H$, then we have
\begin{displaymath}
\begin{array}{ccl}
\sum_{h\in H}\aa'(h) & = & a_1+\sum_{h\in H\cap (F\setminus \{i\})}\aa'(h)+\sum_{h\in H\setminus F}\aa'(h)\\
& \geq & a_1+\sum_{h\in H\cap (F\setminus \{i\})}\aa'(h)+|H\setminus F|\cdot a_2\\
& \geq & a_1+\ldots +a_d \ = \ k.  
\end{array}
\end{displaymath}
Reducing the values of $\aa'$ where possible we can make it in a basic $k$-cover $\aa$. However we cannot reduce the values at the vertices of $F\cup G_0$ because the equalities
\[\sum_{h\in F}\aa'(h) = k \mbox{ \ \ and \ \ }\sum_{h\in F_0\cup G_0}\aa'(h) = k.\]
Thus the basic $k$-covers of $\FD$ are at least so many as the cardinality of $A_k$. So by Lemma  \ref{technicalextimation}  there exists a positive real number $\lambda$ such that for $k\gg 0$ we have
\[\displaystyle \HF(k) = |\{\mbox{basic $k$-covers of }\D\}|\geq \lambda \cdot k^{2d-s-3} \geq \lambda \cdot k^d.\]
So $\HF(k)=\Omega(k^d)$, therefore $\dim(\AD)\geq d+1$. Using the Proposition \ref{thekey} we have that
\[\min \{\depth(S/\JDm):m\in \NN_{\geq 1}\}\leq n-d-1, \]
which contradicts the hypothesis that $S/\JDm$ is Cohen-Macaulay for any $m\in \NN_{\geq 1}$.
\end{proof}

We end the paper stating two corollaries of Theorem \ref{main}. First we recall that the multiplicity of a standard graded $K$-algebra $R$, denoted by $e(R)$, is the leading  coefficient of the Hilbert polynomial times $(\dim(R)-1)!$. Geometrically, let $\Proj R \subseteq \PP^N$, i.e. $R=K[X_0,\ldots ,X_N]/J$ for a homogeneous ideal $J$. The multiplicity $e(R)$ counts the number of distinct points of $\Proj R \cap H$, where $H$ is a generic linear subspace of $\PP^N$ of dimension $N - \dim (\Proj R)$.

\begin{corollary}\label{multdim}
A simplicial complex $\D$ is a $(d-1)$-dimensional matroid if and only if
\[\dim(\AD) = \dim (K[\D]) = d.\]
Moreove, if $\D$ is a matroid then 
\[\displaystyle \HF(k)\leq \frac{e(K[\D])}{(\dim(\AD)-1)!}k^{\dim(\AD)-1}+O(k^{\dim(\AD)-2}).\]
\end{corollary}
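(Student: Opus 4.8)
\textbf{Proof proposal for Corollary \ref{multdim}.}

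The plan is to reduce everything to Theorem \ref{main} and to the two inequalities on $\HF(k)$ extracted in its proof. First I would prove the equivalence. If $\D$ is a $(d-1)$-dimensional matroid, then $\D$ is Cohen-Macaulay so $\dim(K[\D]) = d$, and in the if-part of the proof of Theorem \ref{main} it is shown that $\HF(k) = O(k^{d-1})$ while $\dim(S/\JD) = n-d$, forcing $\dim(\AD) = d$ via Proposition \ref{thekey} (since all $S/\JDm$ are then Cohen-Macaulay of dimension $n-d$). Hence $\dim(\AD) = \dim(K[\D]) = d$. Conversely, suppose $\dim(\AD) = \dim(K[\D]) = d$; then $\dim(K[\D]) = d$ says exactly that $\D$ is $(d-1)$-dimensional, and $\dim(\AD) = d$ gives, by Proposition \ref{thekey}, that $\min\{\depth(S/\JDm)\} = n-d = \dim(S/\JD)$, i.e.\ every $S/\JDm$ is Cohen-Macaulay; applying Theorem \ref{main} to $\D^c$ (using $\JD = I_{\D^c}$ and the self-duality of the matroid property) shows $\D^c$, hence $\D$, is a matroid.

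For the multiplicity bound, suppose $\D$ is a $(d-1)$-dimensional matroid, so $\dim(\AD) = d$ by the first part. Fix a facet $F$ of $\D$; from the if-part of the proof of Theorem \ref{main}, every basic $k$-cover $\aa$ with $\sum_{j\in F}\aa(j) = k$ has $\aa(i) \in A_F = \{\aa(j) : j \in F\}$ for all $i \in [n]$. I would refine the counting there: a basic $k$-cover is determined by choosing a facet $F$ realizing the minimum together with the values $(\aa(j))_{j\in F}$ summing to $k$, but distinct choices of $(F, (\aa(j))_{j \in F})$ can give the same basic cover, so the naive bound $|\FD|\binom{k+d-1}{d-1}$ overcounts. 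The key observation is that the facets $F$ of a $(d-1)$-dimensional matroid are precisely the bases of the matroid, and their number is $|\FD|$; one expects the correct leading term to involve not $|\FD|$ but the number of bases counted with the right weight --- in fact the leading coefficient of the Hilbert polynomial of $K[\D]$ is exactly $e(K[\D])/(d-1)!$ and equals the number of facets of $\D$ (the $f$-vector top entry), since for a Cohen-Macaulay complex of dimension $d-1$ the multiplicity of $K[\D]$ equals $f_{d-1}$, the number of facets. So I would show $\HF(k) \leq f_{d-1}\binom{k+d-1}{d-1} + (\text{lower order})$ is too weak, and instead argue that each basic $k$-cover corresponds to a facet $F$ together with a partition-type datum, but two basic covers sharing a ``shape'' on overlapping facets coincide, cutting the count down exactly to the multiplicity.

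The main obstacle will be the last step: pinning down the constant in front of $k^{d-1}$ as precisely $e(K[\D])/(d-1)!$ rather than merely $O(k^{d-1})$. Concretely, I expect the argument to go: by the exchange property, once $\aa$ is basic the multiset of values $\{\aa(i) : i \in [n]\}$ is controlled, and counting basic $k$-covers amounts to summing over facets $F$ the number of weakly decreasing value-assignments on $F$ summing to $k$, then subtracting the overcount coming from covers realized by more than one facet; the inclusion-exclusion over the facet poset of the matroid collapses, because the matroid basis-exchange structure makes the relevant intersections of lower dimension, leaving exactly $e(K[\D]) = f_{d-1}$ as the coefficient. If making this collapse fully rigorous proves delicate, a cleaner alternative is to invoke that $\HF(k) = \dim_K (\AD)_k$ and that $\AD$ is a quotient of a polynomial ring whose Hilbert series is dominated termwise by that of $K[\D]$ (since basic $k$-covers inject into a set counted by the $K[\D]$-side), yielding the coefficientwise comparison of Hilbert polynomials and hence the stated inequality on leading terms directly.
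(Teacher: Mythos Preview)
Your treatment of the equivalence is correct and is exactly the paper's argument: combine Theorem~\ref{main} with Proposition~\ref{thekey} (and matroid duality for the converse direction).

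For the multiplicity bound you take a wrong turn. You already have in hand the inequality
\[
\HF(k)\ \le\ |\FD|\cdot\binom{k+d-1}{d-1}
\]
from the if-part of the proof of Theorem~\ref{main}, and you even record that for a pure $(d-1)$-dimensional complex $e(K[\D])=f_{d-1}=|\FD|$. At that point you are done: since $\dim(\AD)=d$ and $\binom{k+d-1}{d-1}=\dfrac{k^{d-1}}{(d-1)!}+O(k^{d-2})$, the displayed bound is \emph{literally}
\[
\frac{e(K[\D])}{(\dim(\AD)-1)!}\,k^{\dim(\AD)-1}+O\bigl(k^{\dim(\AD)-2}\bigr),
\]
which is the statement of the corollary. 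This is precisely what the paper does. Your claim that the naive bound is ``too weak'' is therefore mistaken, and the subsequent inclusion--exclusion over facets and the proposed Hilbert-series comparison with $K[\D]$ are unnecessary detours; neither is needed, and neither is justified as written.
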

\begin{proof}
The first fact follows putting together Theorem \ref{main} and Proposition \ref{thekey}.
For the second fact, we have to recall that, during the proof of Theorem \ref{main}, we showed that for a $(d-1)$-dimensional matroid $\D$ we have the inequality
\[\displaystyle \HF(k)\leq |\FD| \cdot \binom{k+d-1}{d-1}.\]
It is well known that if $\D$ is a pure simplicial complex then $|\FD|=e(K[\D])$ (for instance see \cite[Corollary 5.1.9]{BH}), so we get the conclusion.
\end{proof}

\begin{example}
If $\D$ is not a matroid the inequality of Corollary \ref{multdim} may not be true. For instance, take $\D=C_{10}$ the decagon (thus it is a $1$-dimensional simplicial complex). Since $C_{10}$ is a bipartite graph $\bar{A}(C_{10})$ is a standard graded $K$-algebra by \cite[Theorem 5.1]{HHT}. In particular it admits a Hilbert polynomial, and for $k\gg 0$ we have
\[\displaystyle \operatorname{HF}_{\bar{A}(C_{10})}(k)=\frac{e(\bar{A}(C_{10}))}{(\dim(\bar{A}(C_{10}))-1)!}k^{\dim(\bar{A}(C_{10}))-1}+O(k^{\dim(\bar{A}(C_{10}))-2}).\]
In \cite{CV} it is proved that for any bipartite graph $G$ the algebra $\bar{A}(G)$ is a homogeneous algebra with straightening law on a poset described in terms of the minimal vertex covers of $G$. So the multiplicity of $\bar{A}(G)$ can be easily read off from the above poset. In our case it is easy to check that $e(\bar{A}(C_{10}))=20$, whereas $e(K[C_{10}])=10$.
\end{example}

Let us introduce the last result of the paper. An ideal $I$ of a ring $R$ is a set-theoretic complete intersection if there exist $f_1,\ldots ,f_h \in R$, where $h=\height(I)$, such that $\sqrt{(f_1,\ldots ,f_h)}=\sqrt{I}$. The importance of this notion comes from algebraic geometry, since if $I$ is a set-theoretic complete intersection then the variety $\V(I)\subseteq \Spec(R)$ can be defined set-theoretically ``cutting" the ``right" number of hypersurfaces of $\Spec(R)$. A necessary, in general not sufficient, condition for $I$ to be a set-theoretic complete intersection is that the cohomological dimension of it, $\cd(R,I)=\max \{i: H_I^i(R)\neq 0 \}$, is $h$. By a result of Lyubeznik in \cite{Ly} it turns out that $\cd(S,\Id) = n - \depth(K[\D])$, so if $\Id$ is a set-theoretic complete intersection $K[\D]$ will be Cohen-Macaulay.

\begin{remark}
In general if $K[\D]$ is Cohen-Maculay then $\Id$ might not be a set-theoretic complete intersection. For instance, if $\D$ is the triangulation of the real projective plane with 6 vertices described in \cite[p. 236]{BH}, then $K[\D]$ is Cohen-Macaulay whenever $\chara(K)\neq 2$. However, for any characteristic of $K$, $\Id$ need at least (actually exactly) $4$ polynomials of $K[x_1,\ldots ,x_6]$ to be defined up to radical (see the paper of Yan \cite[p. 317, Example 2]{Ya}), but $\height(\Id)=3$.
\end{remark}

\begin{corollary}\label{sci}
Let $K$ be an infinite field. For any matroid $\D$, the ideal $\Id S_{\mm}$ is a set-theoretic complete intersection in $S_{\mm}$.
\end{corollary}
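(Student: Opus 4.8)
The plan is to realize $\Id S_{\mm}$ as the radical of an ideal generated by $\height(\Id)=n-d$ elements, where $d=\dim K[\D]$, by combining the dimension computation that underlies Theorem \ref{main} with the classical theory of minimal reductions over an infinite residue field. The point is that, although the ordinary analytic spread of $\Id$ may be too large, the analytic spread of a suitable ordinary power of the \emph{symbolic} filtration is exactly $n-d$, and this is what the algebra $\AD$ (with $\D$ replaced by the dual) measures.

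First I would set up the symbolic Rees algebra. Since $\Id=J(\Dc)$, we have $\Idm=J(\Dc)^{(m)}$, so the symbolic Rees algebra of $\Id$ is precisely the vertex cover algebra $A(\Dc)=S\oplus\bigoplus_{m\geq 1}\Idm$, which is Noetherian by \cite[Theorem 3.2]{HHT}, and $\bar{A}(\Dc)=A(\Dc)/\mm A(\Dc)$ is its special fiber. Because $\D$ is a matroid, Theorem \ref{main} gives that $S/\Idm$ is Cohen--Macaulay for every $m\geq 1$; as $\Idm$ has the same minimal primes as $\Id$ (the $\wp_{[n]\setminus F}$, $F\in\FD$), we get $\depth(S/\Idm)=\dim(S/\Idm)=\dim K[\D]=d$ for all $m$. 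Hence Proposition \ref{thekey}, applied to $\Dc$ in place of $\D$, yields $\dim\bar{A}(\Dc)=n-\min\{\depth(S/\Idm):m\in\NN_{\geq 1}\}=n-d$.

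Next, choose by \cite[Corollary 2.2]{HHT} an integer $h$ with $(\Id^{(h)})^m=\Id^{(hm)}$ for all $m\geq 1$. Then the $h$-th Veronese subalgebra $\bar{A}(\Dc)^{(h)}=\bigoplus_{m}\Id^{(hm)}/\mm\Id^{(hm)}=\bigoplus_m (\Id^{(h)})^m/\mm(\Id^{(h)})^m$ is, up to regrading, the fiber cone over $S_{\mm}$ of the ordinary ideal $\Id^{(h)}S_{\mm}$ (localizing at $\mm$ does not change the $K$-vector spaces $\Id^{(hm)}/\mm\Id^{(hm)}$). As recalled in the paragraph preceding Proposition \ref{thekey}, $\dim\bar{A}(\Dc)^{(h)}=\dim\bar{A}(\Dc)=n-d$, so the analytic spread $\ell(\Id^{(h)}S_{\mm})$ equals $n-d$; this is also $\height(\Id^{(h)}S_{\mm})$, since $\Id^{(h)}$ has the same minimal primes as $\Id$, all contained in $\mm$.

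Finally, since $K$ is infinite, $\Id^{(h)}S_{\mm}$ admits a reduction $(f_1,\ldots,f_{n-d})$ generated by $\ell(\Id^{(h)}S_{\mm})=n-d$ elements of $S_{\mm}$ (Northcott--Rees; see \cite{BH}), so that $(\Id^{(h)}S_{\mm})^{N}\subseteq(f_1,\ldots,f_{n-d})\subseteq\Id^{(h)}S_{\mm}$ for $N\gg 0$. Taking radicals and using $\sqrt{\Id^{(h)}}=\sqrt{\Id}=\Id$ gives $\sqrt{(f_1,\ldots,f_{n-d})}=\sqrt{\Id^{(h)}S_{\mm}}=\Id S_{\mm}$, and since $\height(\Id S_{\mm})=n-d$ this exhibits $\Id S_{\mm}$ as a set-theoretic complete intersection. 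I expect the only genuinely delicate points to be the identification of $\bar{A}(\Dc)^{(h)}$ with the fiber cone of $\Id^{(h)}$ over $S_{\mm}$ (so that its dimension really is the analytic spread), and the conceptual observation that one must pass to the symbolic power $\Id^{(h)}$ rather than to $\Id$ itself, because a priori $\ell(\Id S_{\mm})$ could exceed $n-d$; everything else is the standard fact that a reduction has the same radical as the ideal, together with results already proved in the paper.
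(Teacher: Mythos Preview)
Your proof is correct and follows essentially the same route as the paper's: pass to the $h$th Veronese of the special fiber of the symbolic Rees algebra so that it becomes the ordinary fiber cone of $\Id^{(h)}$, use Theorem~\ref{main} and Proposition~\ref{thekey} to identify its dimension with $\height(\Id)=n-d$, and then apply Northcott--Rees to extract a minimal reduction whose radical is $\Id S_{\mm}$. The only cosmetic difference is that the paper first invokes matroid duality and proves the statement for $J(\D)$, whereas you work directly with $\Id=J(\Dc)$ and apply Proposition~\ref{thekey} to $\Dc$; these are the same argument up to notation, and your explicit remark on why one must pass to $\Id^{(h)}$ rather than $\Id$ is a helpful clarification.
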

\begin{proof}
By the duality on matroids it is enough to prove that $\JD S_{\mm}$ is a set-theoretic complete intersection.
For $h \gg 0$ it follows by \cite[Corollary 2.2]{HHT} that the $h$th Veronese of $\AD$,  
\[ \AD^{(h)}=\bigoplus_{m\geq 0}\AD_{hm}, \]
is standard graded. Therefore $\AD^{(h)}$ is the ordinary fiber cone of $\JD^{(h)}$. Moreover $\AD$ is finite as a $\AD^{(h)}$-module. So the dimensions of $\AD$ and of $\AD^{(h)}$ are the same. Therefore, using Theorem \ref{main} and Proposition \ref{thekey}, we get
\[ \height(\JD S_{\mm})=\height(\JD)=\dim \AD^{(h)}=\ell(\JD^{(h)})=\ell((\JD S_{\mm})^{(h)}), \]
where $\ell(\cdot)$ is the analytic spread of an ideal, i.e. the Krull dimension of its ordinary fiber cone.
From a result by Northcott and Rees in \cite[p. 151]{NR}, since $K$ is infinite, it follows that the analytic spread of $(\JD S_{\mm})^{(h)}$ is the cardinality of a set of minimal generators of a minimal reduction of $(\JD S_{\mm})^{(h)}$. 
Clearly the radical of such a reduction is the same as the radical of $(\JD S_{\mm})^{(h)}$, i.e. $\JD S_{\mm}$, so we get the statement.
\end{proof}

\begin{remark}
Notice that a reduction of $I S_{\mm}$, where $I$ is a homogeneous ideal of $S$, might not provide a reduction of $I$. So localizing at the maximal irrelevant ideal is a crucial assumption of Corollary \ref{sci}. It would be interesting to know whether $\Id$ is a set-theoretic complete intersection in $S$ whenever $\D$ is a matroid.
\end{remark}

\subsection*{Acknowledgements}
The author would like to thank Margherita Barile for precious information about Corollary \ref{sci}, Aldo Conca for carefully reading the paper and Le Dinh Nam for useful conversations about the topic of the work.


\begin{thebibliography}{99}
\addcontentsline{toc}{chapter}{Bibliografia}
\bibitem[BCV]{BCV} B. Benedetti, A. Constantinescu, M. Varbaro, \textit{Dimension, depth and zero-divisors of the algebra of basic $k$-covers of a graph}, Le Matematiche LXIII, n. II, pp. 117-156, 2008.
\bibitem[BH]{BH} W. Bruns, J. Herzog, \textit{Cohen-Macaulay rings}, Cambridge studies in advanced mathematics, 1993.
\bibitem[BrVe]{BrVe} W. Bruns, U. Vetter, \textit{Determinantal rings}, Lecture notes in mathematics 1327, 1980.
\bibitem[CV]{CV} A. Constantinescu, M. Varbaro, \textit{Koszulness, Krull Dimension and Other Properties of Graph-Related Algebra}, available on line at arXiv:1004.4980v1, 2010.
\bibitem[HHT]{HHT} J. Herzog, T. Hibi, N. V. Trung, \textit{Symbolic powers of monomial ideals and vertex cover algebras}, Adv. in Math. 210, pp. 304-322, 2007.
\bibitem[Ly]{Ly} G. Lyubeznik, \textit{On the local cohomology modules $H_{\mathfrak{U}}^i(R)$ for ideals $\mathfrak{U}$ generated by an $R$-sequence}, ``Complete Intersection'', Lect. Notes in Math. 1092, pp. 214-220, 1984.
\bibitem[MS]{MS} E. Miller, B. Sturmfels, \textit{Combinatorial commutative algebra}, Graduate Texts in Mathematics 227, Springer-Verlag, 2005.
\bibitem[MT]{MT} N. C. Minh, N. V. Trung, \textit{Cohen-Macaulayness of powers of monomial ideals and symbolic powers of Stanley-Reisner ideals}, available on line at arXiv:1003.2152v1, 2010.
\bibitem[NR]{NR} D.G. Northcott, D. Rees, \textit{Reduction of ideals in local rings}, Proc. Cambridge Philos. Soc. 50, pp. 145-158, 1954.
\bibitem[Ox]{Ox} J. G. Oxley, \textit{Matroid Theory}, Oxford University Press, 1992.
\bibitem[St]{St} R. P. Stanley, \textit{Combinatorics and commutative algebra}, Progress in mathematics 41, Birkh\"auser Boston, 1996.
\bibitem[TY]{TY} N. Terai, K. Yoshida, \textit{Locally complete intersection Stanley-Reisner ideals}, available on line at arXiv:0901.3899v1, 2009.
\bibitem[We]{We} D. J. A. Welsh, \textit{Matroid Theory}, Academic Press, London, 1976.
\bibitem[Ya]{Ya} Z. Yan, \textit{An \'etale analog of the Goresky-Macpherson formula for subspace arrangemets}, J. Pure and App. Alg. 146, pp. 305-318, 2000.
\end{thebibliography}
\end{document}